\newtheorem{theorem}{Theorem}[section]
\newaliascnt{conj}{theorem}
\newaliascnt{cor}{theorem}
\newaliascnt{lemma}{theorem}
\newaliascnt{fact}{theorem}
\newaliascnt{claim}{theorem}
\newaliascnt{prop}{theorem}
\newaliascnt{definition}{theorem}
\newtheorem{cor}[cor]{Corollary}
\newtheorem{lemma}[lemma]{Lemma}
\newtheorem{prop}[prop]{Proposition}
\newtheorem{definition}[definition]{Definition}
\theoremstyle{definition}
\newaliascnt{example}{theorem}
\theoremstyle{remark}
\newaliascnt{rmk}{theorem}
\newtheorem{remark}[rmk]{Remark}
\def\sek~{\S{}}
\numberwithin{equation}{section}
\newcommand{\inj}{\operatorname{inj}}
\newcommand{\sectional}{\operatorname{sec}}
\newcommand{\dist}{\operatorname{d}}
\renewcommand{\span}{\operatorname{span}}
\newcommand{\RR}{\mathbb{R}}
\renewcommand{\SS}{\mathbf{S}}
\begin{document}
\title{$1/4$-Pinched Contact Sphere Theorem}
\author{Jian Ge}
\author{Yang Huang}
\maketitle
\begin{abstract}
Given a closed contact 3-manifold with a compatible Riemannian metric, we show that if the sectional curvature is 1/4-pinched, then the contact structure is universally tight. This result improves the Contact Sphere Theorem in~\cite{EKM2012}, where a 4/9-pinching constant was imposed. Some tightness results on positively curved contact open 3-manifold are also discussed.
\end{abstract}
\section{Introduction}
A {\em contact metric manifold} $(M, \xi, g)$ is a contact manifold equipped with a compatible Riemannian metric $g$, where $\xi$ is the contact structure. See \autoref{def:comp-metric} for the definition of compatibility. In~\cite{EKM2012}, the authors studied how the curvature bounds on $g$ implies the (universal) tightness of $\xi$. In particular, the authors showed that if the sectional curvature of $g$ is $4/9$-pinched, then $\xi$ is universally tight. Since any 1/4-pinched closed 3-manifold has the universal cover diffeomorphic to $S^3$ and there is a unique, up to contactomorphism, tight contact structure $\xi_{std}$ on $S^3$~\cite{Eli1992}, the authors therefore concluded that the universal cover of $(M,\xi)$ must be contactomorphic to $(S^3,\xi_{std})$. The main goal of this note is to improve the pinching constant to 1/4. More precisely, we have
\begin{theorem}[Contact Sphere Theorem]\label{thm:Sphere}
Suppose $(M,\xi,g)$ is a closed contact metric 3-manifold. If the sectional curvature $\sectional(g)$ satisfies
$$
\frac14 < \sectional(g)\le 1,
$$
then the universal cover of $M$, with the lifted contact structure, is contactomorphic to $(S^3,\xi_{std})$.
\end{theorem}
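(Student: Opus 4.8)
The plan is to pass to the universal cover, reduce the theorem to a tightness statement, and then invoke Eliashberg's classification. Let $\pi\colon\tilde M\to M$ denote the universal cover, equipped with the lifted data $\tilde g=\pi^*g$ and $\tilde\xi=\pi^*\xi$, so that $(\tilde M,\tilde\xi,\tilde g)$ is again a closed contact metric $3$-manifold (closed by Bonnet--Myers, which also gives $\diam(\tilde M)\le 2\pi$) with $\tfrac14<\sectional(\tilde g)\le 1$. Since $\sectional>0$ forces $\Ric>0$, Hamilton's theorem on $3$-manifolds of positive Ricci curvature — or, alternatively, the classical quarter-pinched sphere theorem together with the resolution of the Poincar\'e conjecture — shows that $\tilde M$ is diffeomorphic to $S^3$. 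Hence it suffices to prove that $\tilde\xi$ is tight: given this, the uniqueness of the tight contact structure on $S^3$ \cite{Eli1992} produces a contactomorphism $(\tilde M,\tilde\xi)\cong(S^3,\xi_{std})$, which is exactly the conclusion of the theorem (and in particular exhibits $(M,\xi)$ as universally tight).

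For the tightness step I would argue by contradiction. If $\tilde\xi$ were overtwisted it would contain an embedded overtwisted disk $\Delta\subset\tilde M$; its boundary $L=\partial\Delta$ is then a Legendrian unknot with $\operatorname{tb}(L,\Delta)=0$, the Thurston--Bennequin invariant being taken with respect to the framing induced by $\Delta$. The aim is to show that the curvature pinching forbids this, by establishing a Bennequin-type inequality $\operatorname{tb}(L,D)<0$ for every Legendrian knot $L$ bounding an embedded disk $D$. The contact metric structure is precisely what makes $\operatorname{tb}$ computable from the geometry: along a unit-speed Legendrian curve $\gamma$ with $\gamma'\in\tilde\xi$ one has the orthonormal frame $\{\gamma',\phi\gamma'\}$ of $\tilde\xi$, and $\operatorname{tb}(L,D)$ equals the signed number of full turns of this frame relative to a frame of $TD$ along $\partial D$. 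I would estimate the two contributions separately: the surface term is controlled, via the Gauss equation and Gauss--Bonnet, by the intrinsic curvature of $D$, which $\sectional(\tilde g)\le 1$ bounds from above; the contact term is controlled by $\nabla\tilde\xi$, equivalently by $\nabla R$ and the structure tensor $h=\tfrac12\mathcal L_R\phi$, whose size is constrained by the sectional curvatures of planes containing $R$ (for instance $\Ric(R,R)=2-\operatorname{tr}(h^2)$ in dimension $3$), hence by the pinching. Integrating over $D$ should then produce the strict negative sign.

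The quantitative heart of the matter — and the point where the pinching constant is forced down to $1/4$ — is Klingenberg's injectivity radius estimate: a simply connected closed manifold with $\tfrac14<\sectional\le 1$ satisfies $\inj\ge\pi$. Coupled with $\sectional(\tilde g)\le 1$, which makes every metric ball of radius $<\pi$ free of conjugate points and hence comparable with a model, this is exactly what lets the twisting estimate close at the sharp threshold, whereas \cite{EKM2012} relied on a coarser local comparison that only functioned for $\sectional\ge 4/9$. Accordingly, the main obstacle I anticipate is the sharp form of the twisting/Bennequin estimate: one must tie the extrinsic geometry of the overtwisted disk and the infinitesimal rotation of $\tilde\xi$ along $\partial\Delta$ to the ambient curvature tightly enough that the only slack consumed is $\inj\ge\pi$ and $\sectional\le 1$ — the very inputs that remain available all the way down to $1/4$-pinching. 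Once this inequality is secured, the existence of $\Delta$ is contradicted, $\tilde\xi$ is tight, and the theorem follows from Eliashberg's classification as above.
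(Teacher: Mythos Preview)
Your reduction to tightness of the universal cover and the concluding appeal to Eliashberg's classification are correct and match the paper. The core tightness argument, however, diverges entirely from what the paper does, and your version has a genuine gap.

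The paper never attempts a direct Bennequin-type inequality. Instead it covers $\tilde M$ by two balls $B_1=B(p,\pi)$ and $B_2$, where $B_2$ is obtained by shrinking the convex set $\tilde M\setminus B(p,(1-\delta)\pi)$ slightly inward (convexity from Toponogov comparison, \autoref{lem:CplmtisConv} and \autoref{cor:B_conv}). The key new ingredient is a convexity estimate (\autoref{prop:Supporting}): on any closed convex domain $D$ with $\sectional\ge 1$, the function $f=e^{\pi/2-\dist(\cdot,\partial D)}$ is $\epsilon$-convex in the sense of smooth supporting functions. This makes the level sets of $\tilde f$ in the symplectization strictly $J$-pseudoconvex (\autoref{prop:nonTangent}), and a maximum-principle argument against Hofer's $J$-holomorphic curves forces $B_2$ to be tight (\autoref{cor:Convex=>Tight}). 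Finally, if $\tilde M$ contained an overtwisted disk, one uses Eliashberg's model on the tight ball $B_2$ to isotope it into $B_1$, then invokes \autoref{thm:EKM} to locate an overtwisted disk on a geodesic sphere $S(p,r_0)$ with $r_0$ just below $\pi$; but such a sphere lies in $B_2$, contradicting tightness. The constant $1/4$ arises because convexity of $\tilde M\setminus B(p,(1-\delta)\pi)$ needs $\sectional>1/4$, while $\sectional\le 1$ is what puts $B_1=B(p,\pi)$ inside the injectivity radius so that \autoref{thm:EKM} applies.

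Your proposed route---bounding $\operatorname{tb}(L,D)$ by integrating curvature and the rotation of $\tilde\xi$ over an arbitrary embedded disk $D$---is not just different but, as written, incomplete in an essential way. You yourself flag the ``sharp twisting/Bennequin estimate'' as the main obstacle, and indeed nothing in the sketch explains (i) how Klingenberg's bound $\inj\ge\pi$ would enter an integral inequality over $D$, or (ii) how one controls the contribution from a possibly very long Legendrian boundary or a highly folded disk: both the Gauss--Bonnet term and the contact term (via $h$, $\nabla\phi$) scale with the size and extrinsic complexity of $D$, and an overtwisted disk is not a priori small or geometrically tame. Pointwise curvature bounds alone do not obviously yield $\operatorname{tb}<0$ without such control. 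The paper sidesteps all of this by never estimating the overtwisted disk directly: it traps the disk between a pseudoconvex shell (where Hofer's holomorphic-curve analysis, not a curvature integral, does the work) and a geodesic ball governed by the EKM tight-radius theorem.
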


\begin{remark}
According to~\cite{Ham1982}, a closed Riemannian 3-manifold with the sectional curvature pinched by any positive number has the universal cover diffeomorphic to $S^3$ (In fact, Hamilton shows the positivity of Ricci curvature is preserved along Ricci flow and converges to constant sectional curvature). At this moment we do not know whether the pinching constant 1/4 is optimal or not.
\end{remark}

Our strategy of proving Theorem~\ref{thm:Sphere} essentially follows the arguments in~\cite{EKM2012}. However, instead of trying to bound the tight radius by convex radius from below as in~\cite{EKM2012}, we construct a shrinking family of (not necessarily smooth) strictly convex spheres and carefully estimate the convexity to ensure the tightness.

Using this new tool, we are able to prove the following two theorems for open manifolds, which can be viewed as a counterpart of Corollary 1.4 in \cite{EKM2012}. Note also that there is no convexity radius estimate on such manifold since there is no upper curvature bound is assumed.

\begin{theorem}\label{thm:R3}
Let $(M, \xi,g)$ be an open contact metric manifold such that $g$ is complete and $\sec(g)>0$, then $\xi$ is tight.
\end{theorem}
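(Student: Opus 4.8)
The plan is to deduce tightness of $\xi$ from a statement about compact subsets, and then to feed the structure of complete open manifolds of positive curvature into the convex-sphere tightness criterion underlying the proof of \autoref{thm:Sphere}. Tightness is detected on compacta: if $\xi$ were overtwisted, an overtwisted disk $\Delta$ would have compact image in $M$, hence would lie inside any sufficiently large member of an exhaustion of $M$ by relatively compact open sets. So it suffices to produce an exhaustion $M=\bigcup_{c}\Omega_c$ by open sets $\Omega_c$, each diffeomorphic to a $3$-ball, whose boundaries $\Sigma_c=\partial\Omega_c$ are (not necessarily smooth) strictly convex spheres forming, for each fixed $c$, a shrinking family collapsing to a point inside $\Omega_c$. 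The criterion then yields that each $\xi|_{\Omega_c}$ is tight, and letting $c\to\infty$ gives that $\xi$ is tight.

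First I would invoke the structure theory. The Soul Theorem (Gromoll--Meyer) gives that $M$ is diffeomorphic to $\RR^3$, and, more usefully, a theorem of Greene--Wu produces a smooth strictly convex exhaustion function $\varphi\colon M\to[0,\infty)$, i.e. with $\mathrm{Hess}\,\varphi>0$ everywhere. Since $\varphi$ is strictly convex along every minimizing geodesic and $M$ is complete, $\varphi$ has a unique nondegenerate minimum $p_0$, which is its only critical point; hence every $c>\varphi(p_0)$ is a regular value.

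Next I would record the geometry of the sublevel sets $\Omega_c=\{\varphi<c\}$ and level sets $\Sigma_c=\varphi^{-1}(c)$ for $c>\varphi(p_0)$. Each $\Omega_c$ is open, convex (a sublevel set of a convex function), relatively compact (as $\varphi$ is an exhaustion), connected, and, carrying a Morse function with a single critical point, diffeomorphic to a $3$-ball, so $\Sigma_c\cong S^2$. Along $\Sigma_c$ the second fundamental form with respect to the outward normal $\nabla\varphi/|\nabla\varphi|$ equals $|\nabla\varphi|^{-1}\,\mathrm{Hess}\,\varphi|_{T\Sigma_c}$, which is positive definite, so $\Sigma_c$ is strictly convex; and as $c\searrow\varphi(p_0)$ the spheres $\Sigma_c$ collapse to $p_0$. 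Thus for any large $c_0$ the family $\{\Sigma_c\}_{\varphi(p_0)<c\le c_0}$ is exactly a (here even smooth) shrinking family of strictly convex spheres on $\Omega_{c_0}$, and the criterion applies. If one prefers to avoid citing Greene--Wu, the Cheeger--Gromoll family of totally convex sublevel sets of Busemann functions provides the same sort of shrinking convex family in the merely Lipschitz category, which is exactly the generality in which the criterion is stated.

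The genuinely hard step is not in the argument above but in the criterion it rests on: that a ball in a contact metric $3$-manifold filled by a shrinking family of strictly convex spheres is tight. Its proof must convert metric strict convexity of each $\Sigma_c$, via the compatibility of $g$ with $\xi$, into control of the characteristic foliation (equivalently the dividing set) of $\Sigma_c$, and then propagate tightness inward along the family — the careful convexity estimate alluded to in the Introduction. For \autoref{thm:R3} this criterion is used as a black box. The one point to note specific to the present statement is that the exhaustion function $\varphi$ need bear no relation to $\xi$; but since the criterion uses only strict convexity of the spheres together with the compatible pair $(g,\xi)$, none is required.
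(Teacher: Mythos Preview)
Your proposal is correct and follows essentially the same route as the paper: produce a strictly convex exhaustion of $M$ and apply the tightness criterion \autoref{cor:Convex=>Tight} to each sublevel set. The paper uses the Busemann function directly (its third listed property identifies it with a distance-to-boundary, so after rescaling each compact sublevel to have $\sectional\ge 1$ one may feed it into \autoref{prop:Supporting} and then \autoref{cor:Convex=>Tight}); your Greene--Wu smooth strictly convex exhaustion is an equally valid input and, being its own supporting function, sidesteps both the rescaling and \autoref{prop:Supporting}. You even note the Busemann alternative yourself, so the two arguments are really the same.

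One correction to your speculation about the black box: the mechanism behind \autoref{cor:Convex=>Tight} is \emph{not} convex surface theory (characteristic foliations, dividing sets). Rather, $\epsilon$-convexity of the exhaustion on $M$ yields, via the compatible $J$, strict pseudoconvexity of the lifted hypersurfaces $\RR_+\times\Sigma_c$ in the symplectization, and hence a weak maximum principle for $J$-holomorphic curves (\autoref{prop:nonTangent}); an overtwisted disk would then, by Hofer's argument, produce a closed Reeb orbit whose trivial holomorphic cylinder $\RR_+\times\gamma$ violates that principle. This does not affect the validity of your reduction, since you use the criterion as a black box, but it is worth knowing that the compatibility of $g$ with $\xi$ enters through the Levi form on the symplectization rather than through the characteristic foliation on $\Sigma_c$.
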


We note immediately that $M$ being open and being positively curved imply that $M \simeq \RR^3$. In fact using an argument of Wu~\cite{Wu1979}, we can weaken the curvature condition and get

\begin{theorem}\label{thm:wR3}
Let $(M, \xi,g)$ be an open contact metric manifold such that $g$ having nonnegative sectional curvature on $M$ and positive sectional curvature in $M\setminus K$, where $K$ is a compact subset of $M$, then $\xi$ is tight.
\end{theorem}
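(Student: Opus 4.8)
The plan is to deduce Theorem~\ref{thm:wR3} from the mechanism already established for Theorem~\ref{thm:R3}, the only genuinely new input being a more robust source for the shrinking family of strictly convex spheres. As with the other tightness statements here, it suffices to rule out an overtwisted disk, and such a disk would be a compact subset $D\subset M$; hence it is enough to exhibit an exhaustion of $M$ by tight balls. Recall how the proof of Theorem~\ref{thm:R3} (as outlined in the introduction) does this when $\sec(g)>0$: by the theory of convex functions on open manifolds of positive curvature (Cheeger--Gromoll, Gromoll--Meyer, Greene--Wu), the complete manifold $(M,g)$ carries a strictly convex exhaustion function $\psi\colon M\to[0,\infty)$; its sublevel sets $\Omega_t=\{\psi<t\}$ are convex balls with strictly convex boundary spheres $\Sigma_t=\{\psi=t\}$, and $\psi$ has a unique minimum point $p_0$ about which the spheres $\Sigma_t$ shrink as $t\downarrow\min\psi$. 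Feeding the family $\{\Sigma_t\}$ into the convexity estimate --- the ``new tool'' of this paper --- shows that each $\Omega_t$ is tight, and since $\bigcup_t\Omega_t=M$, tightness of $\xi$ follows.

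For Theorem~\ref{thm:wR3} I would run exactly this argument, provided the existence of a strictly convex exhaustion function can still be secured under the weaker curvature hypothesis; this is precisely where Wu's method~\cite{Wu1979} enters. Nonnegativity of $\sec(g)$ alone yields only a convex (not necessarily strictly convex) exhaustion function, but when in addition $\sec(g)>0$ on $M\setminus K$ for some compact $K$, one can upgrade it to a strictly convex exhaustion function $\psi$. In particular the soul of $M$ is a point, so $M$ is diffeomorphic to $\RR^3$ as asserted, $\psi$ attains its minimum at a single point $p_0$, and for regular values $t>\min\psi$ the level sets $\Sigma_t$ are strictly convex spheres shrinking to $p_0$, while the sublevel sets $\Omega_t$ --- geodesically convex with strictly convex boundary --- are balls exhausting $M$.

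Once this family $\{\Sigma_t\}$ is in hand, the remainder of the proof is identical to that of Theorem~\ref{thm:R3}: the convexity estimate applied to $\{\Sigma_t\}$ shows each $\Omega_t$ is a tight ball, so any overtwisted disk $D\subset M$ would lie in some $\Omega_{t_0}$, contradicting its tightness. Therefore $\xi$ is tight.

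The step I expect to be the main obstacle is confirming that the convexity estimate behind Theorem~\ref{thm:R3} really applies to this family. No upper curvature bound is assumed, so small geodesic spheres are unavailable (there is no convexity radius estimate), the lower bound on the second fundamental form of $\Sigma_t$ may degenerate along the family, and near $K$ the ambient curvature is merely nonnegative. One must therefore verify that the estimate uses only the two facts that each $\Sigma_t$ is a strictly convex sphere belonging to the family and that the family contracts to a point --- never a uniform convexity bound nor pointwise positivity of the ambient curvature. If that estimate is already formulated for an abstract shrinking family of (not necessarily smooth) strictly convex spheres, the conclusion is immediate; otherwise it must be re-derived at that level of generality, which is where the real work lies. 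A secondary point is to invoke Wu's construction in the sharp form that produces a \emph{strictly} convex --- rather than merely convex --- exhaustion function, along with the standard fact that the sublevel sets of such a function are balls with spherical boundary.
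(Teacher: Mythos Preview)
Your approach matches the paper's: both invoke Wu to obtain a strictly convex exhaustion (the Busemann function $b$) and then conclude tightness of the sublevel sets, hence of $M$. The clarification your final paragraph is reaching for is this: Proposition~\ref{prop:Supporting} genuinely requires $\sec\ge 1$ and is \emph{not} reproved without it; rather, Wu's Theorem~C(a) \emph{replaces} it, asserting directly that $\kappa\circ b$ is $\epsilon$-convex in the sense of Definition~\ref{def:e-convexity} (with smooth supporting functions) for any smooth $\kappa$ with $\kappa,\kappa',\kappa''>0$. Once that quantitative convexity is in hand, the Levi-form computation in Proposition~\ref{prop:nonTangent} and hence Corollary~\ref{cor:Convex=>Tight} require no further curvature hypothesis, so each sublevel $C^t$ is tight and $M=\bigcup_t C^t$ is tight.
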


This paper is organized as follows. In \autoref{s:1} we review the notion of $\epsilon$-convexity in Riemannian geometry, and in particular, we establish a convexity estimate which is the key ingredient in our proof \autoref{thm:Sphere}. In \autoref{s:2} we compare the the Riemannian convexity and pseudo-convexity in almost complex manifold. Proofs of \autoref{thm:Sphere}, \autoref{thm:R3} and \autoref{thm:wR3} are given in \autoref{s:3} and \autoref{s:4}.\\

It is our pleasure to thank Werner Ballmann for useful discussions, and John Etnyre for helpful comments.

\section{Riemannian Convexity}\label{s:1}
In this section, we will study some convexity properties of a $3$-dimensional Riemannian manifold $(M, g)$. We restrict ourself to dimension three because we are interested in the geometry of contact 3-manifold, but all the results in this section still hold in any dimension. We say a domain $D\subset M$ is {\em convex}, if any two points $x, y\in D$ can be joint by a minimal geodesic $\gamma$ contained in $D$. In this note, we denote the distance function induced by the Riemannian metric by $\dist(\cdot, \cdot)$ and an open ball of radius $r$ at $p$ by $B(p, r)$. By writing $\partial B(p, r)$ we mean the distance sphere of radius $r$ centered at $p$.

\begin{lemma}\label{lem:CplmtisConv}
Let $(M, g)$ be a closed Riemannian manifold with $$\sectional(g)\ge 1,$$ then for any $p\in M$, the close set $N:=M\setminus B(p, \pi/2)$ is convex.
\end{lemma}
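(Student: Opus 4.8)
The plan is to show that $N = M \setminus B(p,\pi/2)$ is convex by proving that any minimal geodesic between two points of $N$ cannot enter $B(p,\pi/2)$. Suppose for contradiction that $x, y \in N$ and every minimal geodesic $\gamma$ from $x$ to $y$ meets the open ball $B(p,\pi/2)$. Pick such a $\gamma$ and let $q$ be a point on $\gamma$ achieving the minimum distance from $p$ along $\gamma$, so $\dist(p,q) < \pi/2$. Since $\gamma$ is minimizing and the endpoints lie outside the open ball, this minimum is attained at an interior point $q$, and at $q$ the first variation of the distance-to-$p$ function along $\gamma$ vanishes; equivalently, $\gamma$ is perpendicular at $q$ to the minimal geodesic $\sigma$ from $p$ to $q$.

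The key mechanism will be the second variation formula applied to the function $f(t) = \dist(p,\gamma(t))$ near $t = t_0$ (where $\gamma(t_0) = q$), combined with the Rauch/Bonnet-type comparison coming from $\sectional(g) \ge 1$. Because $\dist(p,q) < \pi/2$, there are no conjugate points of $p$ before $q$ along $\sigma$, so $f$ is smooth near $t_0$; and the Hessian comparison theorem for the distance function under $\sectional \ge 1$ gives $\operatorname{Hess}(\dist(p,\cdot)) \le \cot(\dist(p,\cdot))\, (g - d(\dist)\otimes d(\dist))$. Evaluating the second variation of $f$ at $t_0$, where $\gamma'(t_0)$ is orthogonal to $\nabla \dist(p,\cdot)$, yields
$$
f''(t_0) \le \cot\big(\dist(p,q)\big) < \infty, \qquad \text{but more importantly } f''(t_0) \le \cot(r_0)\,|\gamma'(t_0)|^2
$$
with $r_0 = \dist(p,q) < \pi/2$, hence $f''(t_0) > 0$ is the wrong sign — wait, $\cot(r_0) > 0$ here, so this says $q$ is a strict local \emph{minimum} only if the inequality went the other way. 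The correct route is instead: use that $\sectional \le 1$ is \emph{not} assumed, so I must argue $f$ is convex enough. Let me restate the real plan: show directly that $t \mapsto \cos(f(t))$ is concave along $\gamma$ whenever $f < \pi/2$, using $\sectional \ge 1$ and the standard Hessian comparison, since $(\cos \circ f)'' = -\cos(f)\big(1 + (f')^2\big) + (\text{curvature term}) \le 0$ type estimate; then $\cos(f)$ attains an interior minimum, contradicting concavity unless $\cos(f)$ is constant $\le 0$, i.e. $f \ge \pi/2$ everywhere on $\gamma$, which is exactly the claim.

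The main obstacle I anticipate is handling the regularity of $f = \dist(p,\cdot)$ along $\gamma$: a priori $f$ is only Lipschitz, and the clean second-variation argument needs smoothness or at least a barrier/support-function argument at the minimum point $q$. I would resolve this by noting that at an interior minimum $q$ with $\dist(p,q)<\pi/2$, the point $q$ is not in the cut locus of $p$ (if it were, one could shortcut $\gamma$ and contradict minimality), so $f$ is smooth in a neighborhood of $q$ along $\gamma$ and the comparison estimate applies honestly there; this forces $(\cos\circ f)''(t_0) < 0$ at the interior minimum, which is incompatible with $q$ being a minimum of $\cos\circ f$ (equivalently a minimum of $f$) — the desired contradiction. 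Finally, I would double-check the boundary-case bookkeeping when $x$ or $y$ lies exactly on $\partial B(p,\pi/2)$, where the geodesic is tangent rather than transverse, using a limiting argument or the closedness of $N$.
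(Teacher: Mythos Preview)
Your proposal has a genuine gap: the direction of the Hessian comparison is reversed, and with it the whole local-extremum argument collapses.

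Under the hypothesis $\sectional(g)\ge 1$ one obtains the \emph{upper} bound $\operatorname{Hess} r \le \cot(r)\,(g - dr\otimes dr)$ for $r=\dist(p,\cdot)$. Computing the Hessian of $\cos r$ then yields
\[
\operatorname{Hess}(\cos r)\;=\;-\cos(r)\,dr\otimes dr-\sin(r)\operatorname{Hess} r\;\ge\;-\cos(r)\,g,
\]
a \emph{lower} bound, not concavity. In particular on the region $\{r<\pi/2\}$ you only learn $(\cos f)''\ge -\cos f$, which is perfectly compatible with $q$ being an interior minimum of $f$ (equivalently an interior \emph{maximum} of $\cos f$, not a minimum as you wrote). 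So neither the ``concave, hence no interior minimum'' step nor the strict sign $(\cos f)''(t_0)<0$ step goes through. Your cut-locus justification is also off: $q$ lying in the cut locus of $p$ has no direct bearing on whether $\gamma$ (a minimal geodesic from $x$ to $y$) can be shortened.

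The paper avoids all of this by a one-line appeal to Toponogov's triangle comparison: build the comparison triangle $\tilde p\tilde x\tilde y$ in $\SS^2(1)$, use $\dist(p,\gamma(t))\ge \dist_{\SS^2}(\tilde p,\tilde\gamma(t))$, and invoke the elementary fact that a closed hemisphere in $\SS^2$ is convex. If you want to push through a Hessian/second-variation route, you cannot stop at the critical point; you must run a Sturm-type comparison for the differential inequality $u''+u\ge 0$ (with $u=\cos f$) on $[0,L]$, which forces any subinterval on which $u>0$ to have length $\ge\pi$ and hence reduces you to the round-sphere case anyway. That is essentially a re-derivation of Toponogov, so the paper's direct use of the comparison theorem is the cleaner path.
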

\begin{proof}
Without loss of generality, we may assume $N$ contains at least two distinct points, say, $x, y\in N$. Let $\gamma: [0, a] \to M$ be a geodesic parametrized by arc-length with $\gamma(0)=x, \gamma(a)=y$ such that $a=\dist(x, y)$. Since $\sectional(g)\ge 1$, the diameter of $M$ is less or equal to $\pi$ with equality holds if and only if $M$ is isometric to the round sphere. Hence we can further assume $a<\pi$, otherwise if $a=\pi$ then $N$ is a round hemisphere and the statement is clear. Choose the comparison triangle $\Delta\tilde{p}\tilde{x}\tilde{y}$ in $\SS^2(1)$, the unit $2$-sphere, i.e., the points in $\SS^2(1)$ such that $\dist_{S^2}(\tilde{p},\tilde{x})=\dist(p,x)$, $\dist_{S^2}(\tilde{p},\tilde{y})=\dist(p,y)$ and $\dist_{S^2}(\tilde{x},\tilde{y}) =\dist(x,y)$, where $\dist_{S^2}$ is induced by the round metric on $\SS^2(1)$. Let $\tilde{\gamma}$ be the minimal geodesic in $\SS^2(1)$ connecting $\tilde{x}$ and $\tilde{y}$. Then Toponogov's comparison theorem implies that
$$
\dist(p, \gamma(t)) \ge \dist(\tilde{p}, \tilde{\gamma}(t))
$$
for all $t$.
It is clear that $\dist(\tilde{p}, \tilde{\gamma}(t))\ge \pi/2$ in $\SS^2(1)$. Hence $\gamma(t)\in N$ for any $t\in [0, a]$, i.e., $N$ is convex.
\end{proof}

For example, if we take $M$ to be the round 3-sphere $\SS^3(1)$, then the $N$ defined above is a hemisphere with a totally geodesic boundary, which is a great 2-sphere. However if we shrink $N$ a little bit, then we get a smaller hemisphere with strictly convex boundary. It is helpful to keep this example in mind because a similar argument will be used later to construct a convex ball in a contact metric 3-manifold.

For our later purposes, we need to study continuous convex functions. One quick way to define the convexity of a continuous function $f: M \to \RR$ is to require that its restriction on any geodesic segment is convex as a function from $\RR$ to $\RR$. But this definition is not good enough for our purposes, so we need the following qualitative definition from \cite{Esc1996}.

\begin{definition}\label{def:e-convexity}
Let $(M, g)$ be a Riemannian manifold and $\epsilon>0$ be a constant. A continuous function $f: M\to \RR$ is called {\em $\epsilon$-convex} if for any point $p\in M$ and any $0<\eta<\epsilon$, there exists a smooth function $h$, defined in an open neighborhood $U$ of $p$, such that
\begin{itemize}
	\item{$h\le f$ in $U$,}
	\item{$h(p)=f(p)$,}
	\item{$D^2h(v, v) \ge \eta \| v\|^2  \text{ for any } v\in T_p(M).$}
\end{itemize}
Such an $h$ is called a {\em $\eta$-supporting function} of $f$ at $p$, or just supporting function when $\eta$ is implicit.
\end{definition}

The following estimation of the convexity of distance function to the boundary is crucial for our proof. Some similar statements for Busemann function have been proved by Cheeger-Gromoll~\cite{CG1972} and Wu~\cite{Wu1979}. For distance function to boundary of Alexandrov spaces with lower curvature bound, it is first proved by Perelman in~\cite{Per1991} and than made rigors by Alexander-Bishop~\cite{AB2003}. However the proof given in~\cite{AB2003} used several important tools and fundamental structural theory for Alexandrov spaces and hence require more background knowledge. In order to keep this note more self-contained we present a pure Riemannian geometric proof, which is more accessible for general readers. Which also has the advantage that the explicit supporting function is constructed, which can be used later.

\begin{prop}[Convexity Estimate]\label{prop:Supporting}
Let $(M, g)$ be a Riemannian manifold with $\sectional(g)\ge 1$, and $D\subset M$ be a closed convex domain with nonempty boundary. Set
$$
f(x)=e^{\pi/2-\dist(x, \partial D)},
$$
then $f(x)$ is strictly convex. Moreover if $\dist(x, \partial D)=\ell$ then $f$ is $\min(1, \ell)$-convex.
\end{prop}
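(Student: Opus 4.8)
The plan is, for each $p\in\interior D$ with $\ell:=\dist(p,\partial D)>0$ and each $\eta<\min(1,\ell)$, to construct an explicit $\eta$-supporting function for $f:=e^{\pi/2-\dist(\cdot,\partial D)}$ at $p$ in the sense of \autoref{def:e-convexity}; strict convexity then follows since $\ell>0$ throughout $\interior D$. Write $f=\Phi\circ\rho$ with $\rho=\dist(\cdot,\partial D)$ and $\Phi(t)=e^{\pi/2-t}$, which is smooth, positive, decreasing and convex. We may assume $0<\ell<\pi/2$ (the value $\ell=\pi/2$ is a rigid borderline case, forcing $\overline{B(p,\pi/2)}\subset D$ as in \autoref{lem:CplmtisConv}, that is handled by continuity). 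Fix a nearest point $q\in\partial D$ and the minimizing geodesic $\sigma$ from $p$ through $q$, and for a small $s>0$ extend it past $q$ to $y:=\sigma(\ell+s)$. Using that $D$ is convex and that $B(p,\ell)\subset\interior D$, one checks that $\sigma|_{[0,\ell+s]}$ is still minimizing, that $\sigma$ leaves $D$ at $q$ and never re-enters, and that $q$ is the nearest point of $D$ to $y$; hence $\dist(y,D)=s$, i.e.\ $y$ lies on the outer parallel hypersurface $\partial D_s$, where $D_s:=\{x:\dist(x,D)\le s\}\supseteq D$. The crucial point is the elementary estimate $\dist(x,\partial D_s)\ge\dist(x,\partial D)+s$ for all $x\in\interior D$ — a path to $\partial D_s$ must first meet $\partial D$, then travel a further distance at least $s$ — with equality at $x=p$, since $\sigma$ simultaneously realizes the distance $\ell$ to $q\in\partial D$ and the distance $\ell+s$ to $y\in\partial D_s$. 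It follows that
$$
h(x):=e^{\,\pi/2+s}\,e^{-\dist(x,\partial D_s)}
$$
satisfies $h\le f$ on $\interior D$ and $h(p)=f(p)=e^{\pi/2-\ell}$, and that $h$ is smooth on a neighborhood of $p$ for a generic small $s$ (being a perpendicular minimizing geodesic, $\sigma$ keeps $p$ off the focal cut locus of $\partial D_s$; if $\rho$ fails to be differentiable at $p$, replace $\partial D_s$ by its single sheet through $y$). Thus $h$ is a candidate $\eta$-supporting function, and it remains to estimate $D^2h$ at $p$.

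Set $\psi:=\dist(\cdot,\partial D_s)$, so that $D^2h=e^{\pi/2+s}e^{-\psi}\bigl(d\psi\otimes d\psi-D^2\psi\bigr)$; since $|\nabla\psi|=1$ and $D^2\psi(\nabla\psi,\nabla\psi)=0$, everything comes down to an upper bound for $D^2\psi$ in directions tangent to the level sets. Along $\sigma$, which is orthogonal to every level set of $\psi$, the shape operator $S=D^2\psi$ satisfies the Riccati equation $S'+S^2+R_{\dot\sigma}=0$; its value at $y$ is the second fundamental form of $\partial D_s$ with respect to the inward normal, and by the convexity of $D$ (which makes the second fundamental form of $\partial D$ nonpositive) together with $\sectional(g)\ge1$, a Hessian-comparison argument for the distance to the convex body $D$ shows this is at most $Cs\cdot g$ for a constant $C=C(p,D)$. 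Propagating by Riccati with $\sectional(g)\ge1$ over the depth $\ell+s$ then gives
$$
D^2\psi|_p\ \le\ \cot\!\bigl(\ell+s+\operatorname{arccot}(Cs)\bigr)\,\bigl(g-d\psi\otimes d\psi\bigr),
$$
whose right-hand side tends to $-\tan(\ell)\,(g-d\psi\otimes d\psi)$ as $s\to0^+$. Consequently the eigenvalues of $d\psi\otimes d\psi-D^2\psi$ at $p$ tend to $1$ (radially) and $\tan\ell$ (tangentially), so $D^2h|_p\to e^{\pi/2-\ell}\min(1,\tan\ell)\,g$. A direct check shows $e^{\pi/2-\ell}\min(1,\tan\ell)\ge\min(1,\ell)$ for $0<\ell\le\pi/2$, strictly when $\ell<\pi/2$; hence, given $\eta<\min(1,\ell)$, for $s$ small enough one has $D^2h|_p\ge\eta g$, and $h$ is the required $\eta$-supporting function.

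I expect the main obstacle to be the second-fundamental-form bound at $y$ — equivalently, a lower Hessian bound for $\dist(\cdot,D)$ just outside the convex set $D$ — and the regularity bookkeeping it entails: verifying that $\sigma|_{[0,\ell+s]}$ is minimizing with $q=\pi_D(y)$, that $\partial D_s$ and $\psi$ are smooth near $y$ and $p$ for suitable $s$, and, when $\partial D$ is not smooth, that the outer parallel $\partial D_s$ is nonetheless $C^{1,1}$ with a one-sided second fundamental form bound (alternatively, one may first exhaust $D$ by smooth strictly convex subdomains). Once these points are settled the Riccati comparison is routine, and the strict inequality $e^{\pi/2-\ell}\min(1,\tan\ell)>\min(1,\ell)$ provides ample slack to absorb the $O(s)$ error terms.
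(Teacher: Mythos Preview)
Your approach is sound in outline but takes a genuinely different route from the paper's. The paper never passes to an outer parallel hypersurface or invokes Riccati comparison; instead, for each unit geodesic $\gamma$ through $p$ with $\gamma'(0)=a\,\sigma'(0)+bW$ ($W\perp\sigma'(0)$), it builds an explicit variation $\alpha(t,s)=\exp_{\sigma(t)}(sV(t))$ of the minimizing segment $\sigma$ via the vector field $V(t)=a(1-t/\ell)\sigma'(t)+bW(t)$, engineered so that the moving endpoint $\alpha(\ell,s)=\exp_q(sW(\ell))$ stays out of $\interior D$ by convexity. Then $L(s):=\pi/2-\text{length}(\sigma_s)$ is a smooth lower support for $s\mapsto \pi/2-\dist(\gamma(s),\partial D)$, and the second variation formula for arc-length gives $L'(0)=a$ and $L''(0)\ge b^2\ell$ directly from $\sectional\ge 1$, whence $(e^L)''(0)=e^{\pi/2-\ell}(a^2+b^2\ell)\ge\min(1,\ell)$. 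No regularity of $\partial D$, no outer parallel set, and no initial shape-operator estimate enter at all.

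What each approach buys: the paper's variational argument is short and sidesteps precisely the obstacles you flag --- smoothness of $\partial D_s$, the lower Hessian bound on $\dist(\cdot,D)$ just outside $D$ (which, as you correctly anticipate, needs a \emph{local upper} curvature bound and becomes delicate when $\partial D$ is not $C^2$ at $q$; your ``single sheet'' fix then degenerates to a radius-$s$ sphere with the wrong initial curvature $\sim 1/s$), and cut/focal-locus bookkeeping. Your approach, when it goes through, produces an honest supporting function on a full neighborhood of $p$ (so it matches \autoref{def:e-convexity} verbatim, whereas the paper's $e^L$ is literally a support only along each geodesic), makes the comparison with the model value $-\tan\ell$ transparent, and yields the slightly sharper constant $e^{\pi/2-\ell}\min(1,\tan\ell)\ge\min(1,\ell)$. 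The trade-off is the extra regularity work, which the paper's second-variation argument avoids entirely; the suggested exhaustion of $D$ by smooth strictly convex subdomains is the cleanest way to close the gap.
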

\begin{proof}
We first setup some notations which will be used in the proof. Let $p\in D\setminus \partial D$, $q\in \partial D$ such that $$\ell=\dist(p, q)=\dist(p, \partial D).$$ Let $\gamma: (-a, a)\to D$ be a unit speed geodesic with $\gamma(0)=p$, and $\sigma: [0, \ell]\to D$ be the geodesic from $p$ to $q$. Hence at $p$, we have the following orthogonal decomposition
$$
\gamma'(0)=a\sigma'(0)+bW,
$$
where $W\in (\sigma'(0))^{\perp}\subset T_p(M)$ and $a^2+b^2=1$. Using a method similar to the definition of variational field in~\cite{Ge2013}, we construct a vector field $V$ along $\sigma$ by
\begin{equation}\label{eq:prop:V(t)}
V(t)=a\left(1-\frac{t}{\ell}\right)\sigma'(t)+bW(t),
\end{equation}
where $W(t)$ is the parallel translate of $W$ along $\sigma$. Let $\alpha:[0, \ell]\times (-\delta, \delta)\to M$ be a variation of $\sigma$ which induces $V(t)$ for $\delta<a$, in other words, $\alpha(t, s)=\exp_{\sigma(t)}(sV(t))$ for $t\in[0,\ell],s\in(-\delta,\delta)$. Since $D$ is convex, and $W(\ell)\perp \sigma'(\ell)$, we have
\begin{equation}\label{eq:prop:02}
\alpha(\ell, s)\not\in D\setminus \partial D \text{ for all } s.
\end{equation}

Now the proof proceeds in two steps.\\

\noindent
\textbf{Step 1}: Constructing a supporting function.\\

Denote the curve $t\to \alpha(t, s)$ by $\sigma_s$. We define $L:(-\delta, \delta)\to \RR$ by
$$
L(s)=\frac{\pi}{2}-\int_0^\ell \sqrt{\langle \sigma_s'(t), \sigma_s'(t) \rangle} dt.
$$
i.e., $L$ is the negative of the length of $\sigma_s$. Clearly $L(0)=\pi/2-\ell$. Let
$$
h(s)=\frac{\pi}{2}-\dist(\gamma(s), \partial D),
$$
Then by \eqref{eq:prop:02}
$$
L(s)\le \frac{\pi}{2}-\dist(\gamma(s), \partial D) = h(s).
$$
i.e., $L$ is a supporting function of $h$ at $\gamma(0)$.\\

\noindent
\textbf{Step 2}: Convexity estimates of $L$ and $e^L$.\\

By the second variational formula for arc-length, we have
$$
L''(0)=\int_0^\ell \Big( R(\sigma', V', \sigma', V')- \langle V', V' \rangle + (\langle V', \sigma' \rangle)^2 \Big) dt.
$$
Here $R(X, Y, Z, W)=\langle -\nabla_X\nabla_Y Z +\nabla_Y\nabla_X Z + \nabla_{[X, Y]}Z, W\rangle$ is the Riemannian curvature tensor. Note that our construction \eqref{eq:prop:V(t)} implies $\langle V', V' \rangle =a^2/\ell^2$ and $(\langle V', \sigma' \rangle)^2= a^2/\ell^2$. Moreover by the assumption that $\sectional (g) \ge 1$, we have
$$
L''(0)\ge \int_0^\ell b^2 dt = b^2\ell.
$$
Consider the composition $e^L$, we calculate as follows
\begin{equation}
\begin{aligned}
(e^L)''(0) &= e^L (L')^2(0)+e^L L'' (0) \\
&=e^{\frac{\pi}{2}-\ell}(a^2+b^2\ell)\\
&\ge \min(\ell, 1)
\end{aligned}
\end{equation}
where we used the first variational formula to get $L'(0)=a$. The last inequality follows from the fact that $\ell \le \pi/2$ under given curvature condition, and $a^2+b^2=1$. By Step 1, $e^L$ supports $e^h$, therefore $f=e^h$ is $\min(\dist(p, \partial D), 1)$-convex.
\end{proof}

\begin{remark}
Our choice of using the exponential function in the construction of $f$ is not essential, and in fact, any function $\kappa:\RR\to \RR$ with $\kappa>0$, $\kappa'>0$ and $\kappa''>0$ will also work. As a consequence the number $\min(1, \ell)$ is also not important since it clearly depends on the choice of $\kappa$. In fact most commonly used functions in metric geometry are \emph{generalized trigonometric functions}, which interpolate analytically between the usual trigonometric and hyperbolic functions. See for example~\cite{AB2003} or~\cite{Ge2013}.
\end{remark}
\section{Pseudoconvexity in symplectizations}\label{s:2}
Let $(M, \xi)$ be a contact $3$-manifold. Following~\cite{EKM2012} we have

\begin{definition}\label{def:comp-metric}
A Riemannian metric $g$ is {\em compatible} with $\xi$ if there is a contact form $\alpha$ defining $\xi$ such that $$||R_\alpha||=1 \text{ and } \ast d\alpha=\theta'\alpha$$ for some positive constant $\theta'$, where $R_\alpha$ is the Reeb vector field and $\ast$ is the Hodge star operator associated with $g$. A compatible triple $(M,\alpha,g)$ is called a {\em contact metric manifold}.
\end{definition}

\begin{remark}
A compatibility condition between contact structure and Riemannian metric first appeared in \cite{CH1985}, where $\theta'=2$.
\end{remark}

\begin{remark}
In \cite{EKM2012}, a notion of {\em weakly compatible metric} is also discussed, where $\theta'$ is not necessarily a constant and the length of $R_\alpha$ is allowed to vary. But we will not use the weak compatibility in this note.
\end{remark}

Throughout this section we will assume that $(M,\alpha,g)$ is a contact metric manifold with $\sectional(g)\ge 1$.

Recall the symplectization $W= \RR_+ \times M$ of $M$ is a symplectic manifold with symplectic form $\omega=d(t\alpha)$ where $t\in\RR_+$. Also fix a metric-preserving compatible almost complex structure $J$ on $W$ such that
$$
J\partial_t = R_\alpha \text{ and } J\xi=\xi.
$$
Let $D \subset M$ be a closed convex domain with boundary and
$$
f(x)=e^{\frac{\pi}{2}-\dist(x, \partial D)}
$$
be a strictly convex function on $D\setminus \partial D$ according to \autoref{prop:Supporting}. We extend $f$ to a function $\tilde{f}$ on $\RR_+ \times N$ by $\tilde{f}(t, x)= f(x)$ for $x\in N, t\in \RR_+$. The following proposition is crucial for our later detection of overtwistedness.

\begin{prop}[Weak Maximal Principle]\label{prop:nonTangent}
Using the notations from above, for $e^{\pi/2}>c>\min{f}$, let $\Omega^c:= \RR_+ \times f^{-1}((-\infty, c])$ and $\Sigma^c:= \RR_+ \times f^{-1}(c)=\partial \Omega^c$ Then the interior of any $J$-holomorphic curve $C$ in $\Omega^c$ is disjoint from $\Sigma^c$.
\end{prop}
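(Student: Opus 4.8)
The plan is to argue by contradiction using a maximum-principle/pseudoconvexity argument on the $J$-holomorphic curve $C$. Suppose the interior of $C$ meets $\Sigma^c$ at a point $z_0 = (t_0, x_0)$ with $f(x_0) = c$. The key is to compose $\tilde f$ (pulled back to $W$) with $C$ and show the resulting function on the domain of $C$ is strictly subharmonic wherever it is smooth, so it cannot attain an interior maximum; but $c$ is a maximum of $\tilde f\circ C$ on a curve lying in $\Omega^c$, giving the contradiction. The subtlety, of course, is that $f$ is only continuous (not smooth) — being built from a distance function — so "subharmonic" must be interpreted in the viscosity sense using the $\eta$-supporting functions provided by \autoref{prop:Supporting}.

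The steps I would carry out are as follows. First, fix $\eta < \min(1, \dist(x_0, \partial D))$ (note $\dist(x_0,\partial D) = \pi/2 - \log c > 0$ since $c < e^{\pi/2}$), and let $h$ be the $\eta$-supporting function of $f$ at $x_0$ from \autoref{prop:Supporting}, so $h \le f$, $h(x_0) = f(x_0) = c$, and $D^2 h \ge \eta \| \cdot \|^2$ on $T_{x_0}M$. Second, extend $h$ to $\tilde h(t,x) = h(x)$ on $\RR_+ \times U$; then $\tilde h \le \tilde f$ with equality at $z_0$. Third — this is the heart of the matter — I would show that $\tilde h \circ C$ is strictly subharmonic near the preimage of $z_0$. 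This reduces to checking that $\tilde h$ is $J$-convex (plurisubharmonic) at $z_0$: for a $J$-complex line, i.e. a plane of the form $\mathrm{span}(v, Jv)$ in $T_{z_0}W$, one needs $\nabla^2\tilde h(v,v) + \nabla^2\tilde h(Jv, Jv) > 0$. Decompose $v$ into its $\partial_t$-component, $R_\alpha$-component, and $\xi$-component. Since $\tilde h$ is constant in the $t$-direction, the Hessian of $\tilde h$ in the $W$-metric picks up correction terms from the second fundamental form of the level sets of $t$, but the crucial positive contribution comes from $D^2 h \ge \eta\|\cdot\|^2$ in the $\xi$ (and $R_\alpha$) directions. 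Because $J$ interchanges $\partial_t \leftrightarrow R_\alpha$ and preserves $\xi$, the pair $\{v, Jv\}$ always has a combined nonzero component inside the "good" directions where $h$ is strictly convex, and one checks the convexity of $h$ dominates the bounded curvature/metric error terms — using here that the symplectization metric is a warped-type product so these error terms are controlled. This yields $\Delta(\tilde h \circ C) > 0$ at the preimage of $z_0$, wherever $C$ is immersed; at branch points one uses the standard local form of $J$-holomorphic maps, or simply works on the open dense set of immersed points and invokes continuity.

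Fourth, conclude: $\tilde h \circ C \le \tilde f \circ C \le c$ on all of $C$ (since $C \subset \Omega^c$), with equality $\tilde h(z_0) = c$ attained at an interior point of $C$. A strictly subharmonic function on a surface cannot attain an interior maximum, contradiction. Therefore no such $z_0$ exists, i.e. the interior of $C$ is disjoint from $\Sigma^c$.

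The main obstacle I expect is Step 3: verifying the $J$-convexity of $\tilde h$ on the symplectization in the presence of only a one-sided (and possibly degenerate in the $\partial_t$-direction) Hessian bound on $h$. One must be careful that the failure of $\tilde h$ to be strictly convex along $\partial_t$ is exactly compensated by $J$ sending $\partial_t$ to $R_\alpha$, where $h$ \emph{is} strictly convex, and that cross terms involving the almost complex structure and the warping do not spoil the inequality. This is precisely where the compatibility of $g$ with $\xi$ (\autoref{def:comp-metric}) and the curvature bound $\sectional(g) \ge 1$ enter, and it is likely the one place where a genuine computation — rather than a formal maximum-principle argument — cannot be avoided.
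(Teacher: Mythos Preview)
Your approach is essentially identical to the paper's: both argue by contradiction, replace $f$ at the touching point by its smooth $\eta$-supporting function $h$ from \autoref{prop:Supporting}, extend to $\tilde h$ on the symplectization, and invoke the maximum principle via strict $J$-convexity (equivalently, strict pseudoconvexity of the smooth level hypersurface $\tilde h^{-1}(c)\times\RR_+$, to which $C$ is tangent from the pseudoconvex side). Your anxiety about Step~3 is unwarranted --- the paper simply quotes \cite{EKM2012}, Proposition~3.7, for the Levi-form estimate $D^2\tilde h(v,v)+D^2\tilde h(Jv,Jv)\ge\tau(2-a^2-b^2)\ge\tau>0$ under the decomposition $v=a\,\partial_t+b\,R_\alpha+e\,v_0$, and no warping or curvature correction terms appear.
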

\begin{proof}
Since $c<e^{\pi/2}$, \autoref{prop:Supporting} implies that $f$ is $\tau$-convex for some $\tau>0$ depends only on $c$. Suppose $\Sigma^c\cap int(C)$ is nonempty, then it contains a point, say, $p$. By the proof of \autoref{prop:Supporting}, there exists a supporting function $g$ of $f$ at $p$. Denote by $\Sigma'$ the hypersurface $g^{-1}(c)\times \RR_+$ in $W$. The following calculation of the Levi form $L_{\tilde g}$ of $\tilde{g}$ is obtained in \cite{EKM2012} (Proposition 3.7), where $\tilde g$ is the usual extension of $g$ on $W$. For any unit vector $v \in T\Sigma' \cap JT\Sigma'$, we can write $v=a \partial_t + b R_\alpha + e v_0$, where $v_0\perp\span {(\partial_t, R_\alpha)}$ a unite vector, hence $a^2+b^2+e^2=1$.
$$
L_{\tilde g}(v, v)=D^2\tilde{g}(v,v) + D^2\tilde{g}(Jv, Jv) \ge \tau (2-a^2-b^2)\ge \tau>0
$$
since $\tilde{g}$ is constant in the $\RR_+$-direction and $h$ is $\tau$-convex by construction. In particular $\Sigma'$ is strictly pseudoconvex. On the other hand, it is easy to see that $C$ is tangent to the smooth hypersurface $\Sigma'$ at $p$, which contradicts the pseudoconvexity of $\Sigma'$.
\end{proof}

\begin{remark}
Note that all the level surfaces $\Sigma^c$, $e^{\pi/2}>c>\min{f}$, are topologically spheres, thanks to the strict convexity of $f$.
\end{remark}

An important consequence is
\begin{cor}\label{cor:Convex=>Tight}
For any $e^{\pi/2}>c>\min{f}$, $f^{-1}((-\infty, c])$ is a tight ball.
\end{cor}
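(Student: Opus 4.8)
The plan is to derive the corollary from \autoref{prop:nonTangent} by running the filling-by-holomorphic-discs argument of Eliashberg and Gromov inside the symplectization $W$, along the lines of the corresponding step in~\cite{EKM2012}. Write $\hat D:=f^{-1}((-\infty,c])$. First I would record that $\hat D$ is a ball: by the remark above its boundary $f^{-1}(c)$ is a topological $2$-sphere, and since $\hat D$ is geodesically convex (being a sublevel set of the convex function $f$) it is a closed topological $3$-ball, a smooth one for generic $c$. It then remains to prove that $\xi$ restricted to $\hat D$ is tight, and this is where the holomorphic curve machinery enters.

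To prove tightness I would argue by contradiction. If $\xi|_{\hat D}$ were overtwisted, then after a $C^\infty$-small perturbation there would be an embedded overtwisted disc $\Delta$ contained in $f^{-1}((-\infty,c'))$ for some $\min f<c'<c$, with a single elliptic singularity $e\in\interior\Delta$ of its characteristic foliation. Viewing $M$ as the slice $\{1\}\times M\subset W$, the disc $\Delta$ then lies in $\Omega^{c'}\subset\Omega^c$, and I would consider the moduli space of $J$-holomorphic discs with boundary on $\Delta$: near $e$ there is the classical $1$-parameter \emph{Bishop family} of embedded such discs collapsing to the constant disc at $e$, which I would extend maximally. Two facts keep the whole family inside $\Omega^c$. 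Horizontally, \autoref{prop:nonTangent} forbids the interior of any $J$-holomorphic disc in $\Omega^c$ from touching $\Sigma^c=\partial\Omega^c$, so the discs stay over $f^{-1}((-\infty,c])$. Vertically, the coordinate $t$ (or a suitable increasing function of it) is plurisubharmonic for $J$ on the symplectization, so the maximum principle confines the discs, whose boundaries lie over $\{t=1\}$, to $\{t\le 1\}$. Moreover, by Stokes' theorem the symplectic area of any such disc equals the integral of $\alpha$ over its boundary curve, which is uniformly bounded because the boundaries lie on the fixed compact disc $\Delta$. Gromov compactness then upgrades the Bishop family to a compact $1$-dimensional moduli space that can degenerate only through bubbling — excluded by the area bound and the monotonicity estimate — or by having its boundary curves sweep out all of $\Delta$, which is impossible since an overtwisted disc cannot be filled. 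Thus this $1$-manifold would have no second end, contradicting the Bishop end at $e$; hence $\xi|_{\hat D}$ is tight and $\hat D$ is a tight ball.

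The main obstacle is the analytic package underlying the previous paragraph: one must verify that the Bishop family cannot escape through the non-compact ends of $\Omega^c$ (the $t\to\infty$ end is dealt with by the maximum principle, but the $t\to 0$ end, and the behaviour near the merely $C^0$ hypersurface $\Sigma^c$, require care — the latter being precisely what \autoref{prop:nonTangent} is built to control), and that transversality, positivity of intersections for discs with totally real boundary in a $4$-manifold, and the exclusion of disc and sphere bubbling all carry over from the classical Eliashberg--Gromov setting. The point that $\hat D$ is a ball for every $\min f<c<e^{\pi/2}$ also needs a line or two, but it is routine by comparison.
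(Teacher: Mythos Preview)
Your approach differs from the paper's, and the difference is not cosmetic: it hides a real gap.

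The paper does \emph{not} try to fill the overtwisted disc by a compact Bishop family. Instead it runs Hofer's argument \cite{Hof1993}: the Bishop family in the symplectization bubbles, and the bubble is a finite energy plane asymptotic to a closed Reeb orbit $\gamma\subset f^{-1}((-\infty,c])$ (\autoref{prop:nonTangent} is used here only to keep the curves horizontally inside $\Omega^c$). The contradiction then comes from a second, very short step: the trivial cylinder $C_\gamma=\RR_+\times\gamma$ is $J$-holomorphic and lies in $\interior\Omega^c$; letting $T=\max_{x\in\gamma} f(x)<c$, the interior point of $C_\gamma$ over any $x\in\gamma$ with $f(x)=T$ sits on $\Sigma^T$, contradicting \autoref{prop:nonTangent} applied at level $T$.

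Your argument instead claims that bubbling is ``excluded by the area bound and the monotonicity estimate''. This is where it breaks. In the non-compact target $\Omega^c$ the failure of compactness of the Bishop family is precisely Hofer bubbling: after rescaling and translating in the $\RR$-factor one obtains a non-constant finite energy plane, not a closed sphere. Exactness of $\omega$ kills sphere bubbles, but it does \emph{not} kill finite energy planes, and no monotonicity estimate is available because the taming $\omega(v,Jv)=t\,d\alpha(v,Jv)$ on $\xi$ degenerates as $t\to 0$. You correctly flag the $t\to 0$ end as delicate; in fact it is fatal for the direct filling argument, since the gradient blow-up that produces Hofer's plane is exactly accompanied by the $\RR$-coordinate of the discs tending to $-\infty$ (equivalently $t\to 0$). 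So the dichotomy ``either bubbling (excluded) or the boundaries sweep out $\Delta$'' is false in this setting, and your $1$-manifold can perfectly well acquire its second end via a broken configuration containing a finite energy plane. The paper's route accepts this bubbling and extracts the contradiction from the resulting Reeb orbit instead.
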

\begin{proof}
It is easy to see that $f^{-1}((-\infty, c])$ is homeomorphic to a ball. Arguing by contradiction, suppose there exists an overtwisted disk $D_{OT}\subset f^{-1}((-\infty, c])$, then \autoref{prop:nonTangent} guarantees that Hofer's proof of the Weinstein conjecture for overtwisted contact structures \cite{Hof1993} carries over in our situation to produce a closed Reeb orbit $\gamma$. Consider the (trivial) $J$-holomorphic cylinder $C_\gamma:=\RR_+ \times \gamma$ contained in the interior of $\Omega^c$. Define
$$
T=\inf \{0>t>c ~|~ C_\gamma\subset \Omega^t, C_\gamma \cap \Sigma^t  =\emptyset \}.
$$
Then clearly $\Omega^T$ intersects the interior of $C_\gamma$ nontrivially, which contradicts \autoref{prop:nonTangent}.
\end{proof}

\section{Proof of \autoref{thm:Sphere}}\label{s:3}
In this section we assume that $(M, \xi,g)$ satisfies the assumptions in \autoref{thm:Sphere}. Passing to the universal cover if necessary, we may further assume that $M$ is simply connected. Note that the compactness condition is preserved due to the positivity of $\sec(g)$. We start by a slight refinement of \autoref{lem:CplmtisConv} as follows.
\begin{lemma}\label{cor:B_conv}
Given $\sectional(g)>1/4$, there exists $\delta>0$ such that the set $M\setminus B(p, (1-\delta)\pi)$ is convex.
\end{lemma}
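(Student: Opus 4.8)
The statement rescales Lemma~\ref{lem:CplmtisConv} from the normalization $\sectional\ge 1$ to $\sectional>1/4$. Since $\sectional(g)>1/4$, we are almost in the situation of Lemma~\ref{lem:CplmtisConv} after a homothety: scaling the metric $g$ by a factor $4$ (i.e. replacing $g$ by $g_\lambda=\lambda g$ with $\lambda$ slightly larger than $4$) produces a metric with $\sectional(g_\lambda)\ge 1$, and distances scale by $\sqrt{\lambda}$. The subtlety is that Lemma~\ref{lem:CplmtisConv} only gives convexity of the complement of a ball of radius \emph{exactly} $\pi/2$, whereas here we want a ball of radius $(1-\delta)\pi$ which, after rescaling to the $\sectional\ge 1$ normalization, is a radius slightly below $\pi$ rather than at $\pi/2$. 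So the real content is: under $\sectional\ge 1$, the complement $M\setminus B(p,r)$ is convex not just for $r=\pi/2$ but for all $r$ up to something close to $\pi$.

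First I would make the scaling precise: choose $\delta>0$ small enough that, setting $\lambda := (1/4 + \epsilon_0)^{-1}$ where $\epsilon_0>0$ is a lower bound with $\sectional(g) \ge 1/4+\epsilon_0$ (such $\epsilon_0$ exists by compactness and continuity of curvature), the rescaled radius $(1-\delta)\pi\sqrt{\lambda}$ is at most $\pi$; shrinking $\delta$ further if needed we can keep it strictly below $\pi$, say equal to $(1-\delta')\pi$ for some $\delta'>0$. Thus it suffices to prove: if $\sectional(h)\ge 1$ on a closed manifold and $0<r<\pi$, then $N_r := M\setminus B(p,r)$ is convex. The argument is verbatim the proof of Lemma~\ref{lem:CplmtisConv}: given $x,y\in N_r$, take a minimal geodesic $\gamma$ from $x$ to $y$ of length $a=\dist(x,y)$. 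Since $\sectional(h)\ge 1$, $\diam(M)\le\pi$, so $a\le\pi$; if $a=\pi$ then (as in the lemma) $M$ is the round sphere and $N_r$ is a closed metric ball complement which one checks directly is convex, so assume $a<\pi$ and form the Toponogov comparison triangle $\Delta\tilde p\tilde x\tilde y$ in $\SS^2(1)$. Toponogov gives $\dist(p,\gamma(t))\ge\dist_{S^2}(\tilde p,\tilde\gamma(t))$. Now the key elementary fact on $\SS^2(1)$: if $\dist_{S^2}(\tilde p,\tilde x)\ge r$ and $\dist_{S^2}(\tilde p,\tilde y)\ge r$ with $r<\pi$, then every point on the minimizing arc $\tilde\gamma$ also has distance $\ge r$ from $\tilde p$ — equivalently, metric balls $B(\tilde p, r)$ in $\SS^2(1)$ are convex for $r\le\pi$ (in fact for $r<\pi$ they are geodesically convex, and the minimizer between two points outside $\bar B(\tilde p,r)$ stays outside). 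Hence $\dist(p,\gamma(t))\ge r$ for all $t$, i.e. $\gamma\subset N_r$.

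The only genuine point requiring care — and what I expect to be the main obstacle — is the convexity of metric balls in $\SS^2(1)$ for the relevant radius range, since for $r$ close to $\pi$ the ball is almost all of $\SS^2$ and its complement is a tiny cap near the antipode $-\tilde p$: one must verify that the complement cap $\SS^2 \setminus B(\tilde p, r)$, which is a spherical cap of radius $\pi - r$, is convex, and this holds precisely because spherical caps of radius $<\pi/2$ — here $\pi-r$, which is $<\pi/2$ once $r>\pi/2$, and for $r\le\pi/2$ one uses instead that $B(\tilde p,r)$ itself being a cap of radius $\le\pi/2$ has convex complement by the original Lemma~\ref{lem:CplmtisConv} argument — are convex. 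So I would split into the two cases $r \le \pi/2$ (cite Lemma~\ref{lem:CplmtisConv} directly after rescaling) and $\pi/2 < r < \pi$ (the complement is a small convex cap, handled by the cap-convexity fact on $\SS^2$), and in both cases push the conclusion up to $M$ via Toponogov as above. Choosing $\delta$ at the very start so that $(1-\delta)\pi\sqrt\lambda<\pi$ closes the argument.
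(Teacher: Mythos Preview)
Your scaling goes the wrong way. Under $g\mapsto\lambda g$ one has $\sectional(\lambda g)=\sectional(g)/\lambda$, so to pass from $\sectional(g)\ge\frac14+\epsilon_0$ to sectional curvature $\ge 1$ you must take $\lambda=\frac14+\epsilon_0$, not its reciprocal. With the correct $\lambda$, distances shrink by $\sqrt{\lambda}\approx\frac12$, and the radius $(1-\delta)\pi$ becomes $(1-\delta)\pi\sqrt{\tfrac14+\epsilon_0}$ in the rescaled metric --- a number close to $\pi/2$, not close to $\pi$.

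This is exactly how the paper proceeds: since the lemma only asks for the \emph{existence} of some $\delta>0$, one simply chooses $\delta$ so that the rescaled radius equals $\pi/2$ on the nose, i.e.\ $(1-\delta)\pi=\dfrac{\pi}{2\sqrt{\tfrac14+\epsilon_0}}$, and then Lemma~\ref{lem:CplmtisConv} applies verbatim. No further work is required.

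Your detour through a putative stronger statement --- that $M\setminus B(p,r)$ is convex for \emph{all} $0<r<\pi$ when $\sectional\ge1$ --- is therefore unnecessary, and moreover the $r<\pi/2$ half of that claim is false. Already on $\SS^2(1)$ the complement of $B(\tilde p,r)$ is a cap of radius $\pi-r>\pi/2$ about the antipode, and such caps are not convex: two points on its boundary at antipodal longitudes are joined by the minimal geodesic through $-\tilde p$'s antipode, namely through $\tilde p$ itself, which lies outside the complement. Your Toponogov argument for the range $\pi/2\le r<\pi$ is, on the other hand, correct --- it is a mild but genuine extension of Lemma~\ref{lem:CplmtisConv} --- but once the scaling is fixed you land at $r=\pi/2$ exactly and never need it.
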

\begin{proof}
Since $M$ is compact, there exists $\epsilon>0$ such that $\sectional(g) \ge \frac14+\epsilon$
Rescaling the metric to have lower curvature bound $1$ and applying \autoref{lem:CplmtisConv} gives the desired convexity. In fact one can take $\delta>0$ such that
$$
(1-\delta)\pi=\frac{\pi}{2\sqrt{\frac14+\epsilon}}.
$$
\end{proof}

Now let's assume the curvature is $1/4$-pinched and let $N=B(p, (1-\delta)\pi)$. Consider the distance function $h(x) =\dist(x, \partial N )$. Let
$$
B_1=B(p,\pi) \quad\text{and}\quad
B_2= \{x\in N \ |\ h(x)\ge \frac{\delta}{2}\pi\}
$$
Then by \autoref{prop:Supporting} and \autoref{cor:Convex=>Tight}, $B_2$ is a tight ball. Now $M$ is coved by two balls:
$$
M=B_1 \cup B_2.
$$
Moreover we note that $\partial B_2$ is contained in the interior of $B_1$ and also $\partial B_1$ is contained in $B_2$. See \autoref{2dimttdec}.

\begin{figure*}[ht]
% Requires \usepackage{graphicx}
\includegraphics[width=200pt]{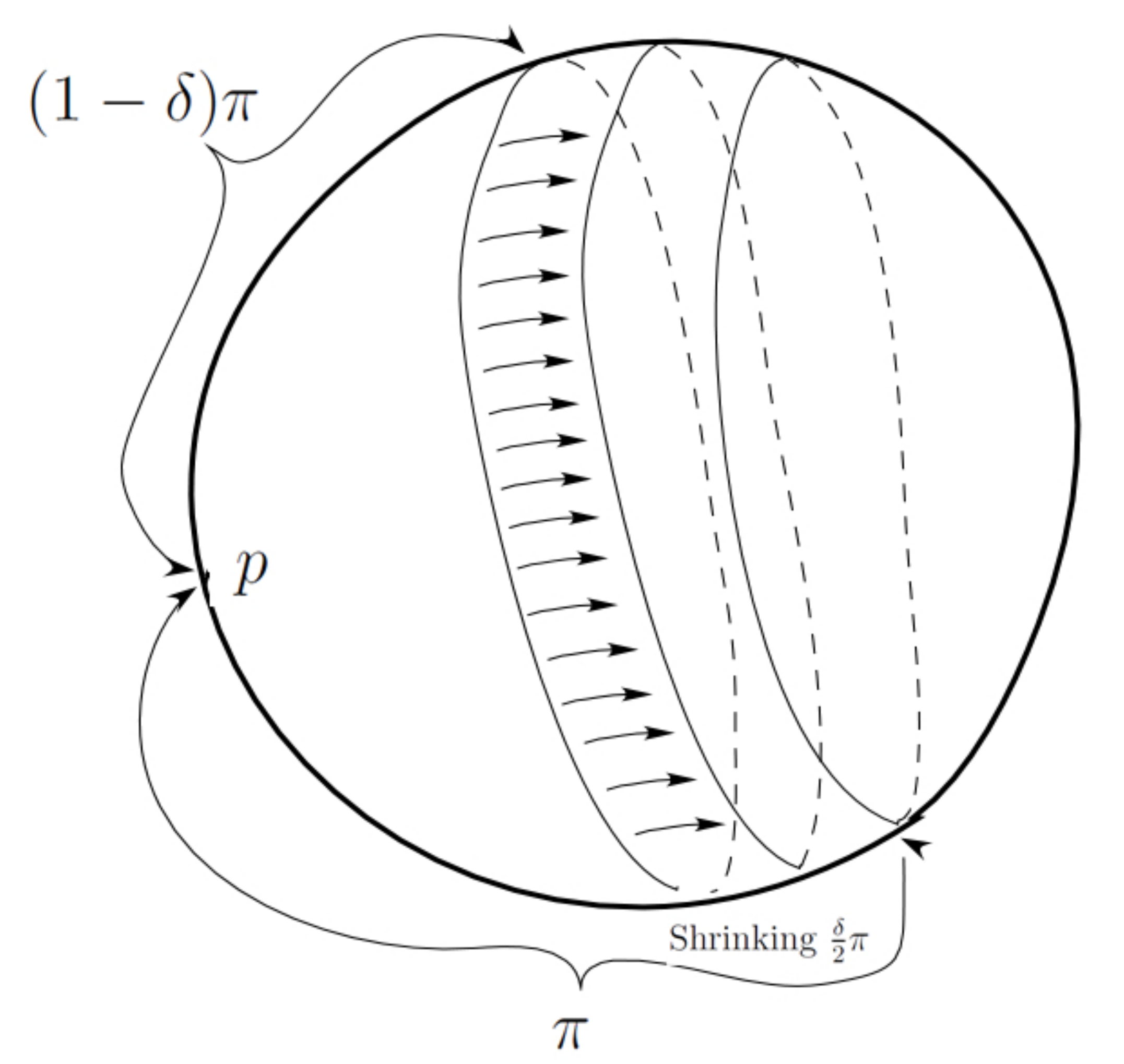}\\
\caption{Covering $M$ by two balls }\label{2dimttdec}
\end{figure*}

Before complete the proof of \autoref{thm:Sphere}, we need the following theorem from \cite{EKM2012}, which tells us where to look for overtwisted disks.

\begin{theorem}[\cite{EKM2012} Theorem 1.2]\label{thm:EKM}
Let $(M,\xi,g)$ be a contact metric $3$-manifold and $\inj(g)$ be the injective radius of $g$. Fix a point $p\in M$. Suppose $B(p,r)$ is overtwisted for some $r<\inj(g)$. Then for any $r\le R <\inj(g)$, the geodesic sphere $S(p, R)$ contains an overtwisted disk.
\end{theorem}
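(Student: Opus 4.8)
The plan is to reduce the statement to a single claim about geodesic balls and then feed that claim into convex surface theory. First note that since $r\le R$, an overtwisted disk contained in $B(p,r)$ is also contained in $B(p,R)$, so $B(p,R)$ is overtwisted as well; and since $R<\inj(g)$, the geodesic sphere $S(p,R)=\partial B(p,R)$ is a smoothly embedded $2$-sphere bounding the embedded ball $B(p,R)$. Hence it suffices to prove: \emph{if $\rho<\inj(g)$ and $B(p,\rho)$ is overtwisted, then $S(p,\rho)$ contains an overtwisted disk.} I would prove this by contraposition, assuming $S(p,\rho)$ contains no overtwisted disk and deducing that $B(p,\rho)$ is tight.

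Granting the assumption, I would first $C^\infty$-perturb $S(p,\rho)$ to a convex surface $S'$ with dividing set $\Gamma\subset S'$, arranging the perturbation to push slightly outward so that a fixed overtwisted disk (if one existed in the interior) still lies inside the enclosed ball $B'$, and so that $S'$ still contains no overtwisted disk. Now I claim $\Gamma$ must be a single embedded circle. Indeed, if $\Gamma$ had two or more components it would cut $S'\cong S^2$ into at least three regions, one of which is a disk $\Delta$ bounded by a single component of $\Gamma$; Giroux's realization lemma then lets one modify the characteristic foliation of $S'$ inside $\Delta$ to produce a limit cycle, hence an overtwisted disk inside $S'$ — a contradiction. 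So $\Gamma$ is connected.

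Next I would invoke Eliashberg's uniqueness theorem for tight contact structures on the $3$-ball: a ball with convex boundary whose dividing set is a single circle carries a unique contact structure up to isotopy, and it is tight; equivalently, any contact structure on $B^3$ inducing the standard characteristic foliation on $\partial B^3$ is tight. Applying this to $B(p,\rho)$, whose boundary we have arranged to be convex with connected dividing set, we conclude that $B(p,\rho)$ is tight. This contradicts the hypothesis that $B(p,\rho)$ is overtwisted, so $S(p,\rho)$ must contain an overtwisted disk, and the theorem follows by the reduction above.

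I expect the main obstacle to be not the logical skeleton but the bookkeeping that makes the convex perturbation legitimate: a geodesic sphere is smooth but a priori far from convex in the contact-geometric sense, and one must check that the $C^\infty$-small perturbation to a convex surface can be carried out while (i) keeping a chosen interior overtwisted disk in the interior of the enclosed ball and (ii) not accidentally creating or destroying an overtwisted disk \emph{on} $S(p,\rho)$, which is delicate because the presence of a limit cycle is not a $C^\infty$-open condition on characteristic foliations (this is presumably why the statement is phrased in terms of the germ of $\xi$ along the surface, or equivalently in terms of the dividing set of a convex representative). Once that is pinned down, the two genuinely heavy inputs — Giroux's realization/flexibility machinery for convex surfaces and Eliashberg's classification of tight contact structures on the ball — do the real work, and the conclusion is exactly the one advertised in the excerpt: overtwisting always propagates out to the geodesic sphere as long as $R<\inj(g)$.
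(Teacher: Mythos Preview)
There is a genuine and fatal gap. Eliashberg's theorem is a \emph{uniqueness} statement among \emph{tight} contact structures on $B^3$: any two tight structures inducing the same characteristic foliation on the boundary are isotopic rel boundary. It does \emph{not} assert that a contact structure on $B^3$ with convex boundary and connected dividing set must be tight, and that assertion is false. Take a Darboux ball and perform a Lutz twist along a small transverse unknot in its interior; the result is overtwisted, yet the boundary sphere is untouched --- still convex, still with a single dividing circle, still free of overtwisted disks. Hence your contrapositive step ``$\Gamma$ connected $\Rightarrow B(p,\rho)$ tight'' simply fails, and with it the whole argument. Said differently: your proof uses only the contact topology of the ball and its boundary, so if it worked it would show that \emph{every} contact $3$-ball whose boundary sphere carries no overtwisted disk is tight; the Lutz-twist example kills that.

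Note also that the present paper does not prove this theorem; it is quoted from \cite{EKM2012} and used as a black box. The moral is that the hypothesis that $g$ is a \emph{compatible} metric (so that $\ast d\alpha=\theta'\alpha$ and $\|R_\alpha\|=1$) is doing essential work in the original proof --- it is what couples the contact structure to the geodesic spheres $S(p,R)$ and lets one propagate an interior overtwisted disk outward as $R$ grows. Any argument that discards $g$ entirely, as yours does, cannot succeed.
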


Now we are ready to finish the proof of our contact sphere theorem.
\begin{proof}[Proof of \autoref{thm:Sphere}]
Recall $M=B_1 \cup B_2$ and $B_2$ is tight. Arguing by contradiction, suppose there exists an overtwisted disk $D_{OT} \subset M$. By Eliashberg's classification of tight contact structures on 3-ball \cite{Eli1992}, there exists a radial contact vector field on $B_2$ whose flow induces a contact isotopy $\phi_t: M \to M$, $t\in[0,1]$, such that $$\phi_0=\text {id},~ \phi_t |_{M \setminus B_2} =\text{id}, \text{ and } \phi_1(D_{OT}) \subset B_1.$$ The last assertion follows from the fact that a small neighborhood of $\partial B_2$ is contained in the interior of $B_1$. Now \autoref{thm:EKM}, applied to $B_1$, implies that for sufficiently small $\epsilon>0$ and $\pi-\epsilon < r_0 < \pi$, the geodesic sphere $S(p, r_0)$ contains an overtwisted disk. But $S(p, r_0) \subset B_1$ for small $\epsilon$, which contradicts the fact the $B_1$ is tight.

Now the classical $1/4$-pinched sphere theorem implies that $M$ is homeomorphic to $S^3$, and since we are in dimension 3, it is diffeomorphic to $S^3$. Again, Eliashberg's uniqueness theorem of tight contact structures on $S^3$ implies that $(M,\xi)$ must be contactomorphic to $(S^3,\xi_{std})$.
\end{proof}

\section{Proof of \autoref{thm:R3} and \autoref{thm:wR3}}\label{s:4}
In this section we prove \autoref{thm:R3} and sketch a proof of \autoref{thm:wR3}. Although \autoref{thm:wR3} implies \autoref{thm:R3}, we would like to emphasize the proof of the positively curved case, which shows how the strictly convexity of the Busemann function played a role. The idea is to construct a strictly convex exhaustion function. Define the Busemann function
$$
b(x)=\lim_{t\to\infty} t- \dist(x, \partial B(p, t)).
$$
where $p\in M$ is a fixed point. It is showed in \cite{CG1972} (cf. also \cite{Wu1979}) that $b$ satisfies the following properties
\begin{enumerate}
\item $b$ is a strictly convex Lipschitz function bounded from below by $a_0>-\infty$;
\item $b$ is a exhaustion function, i.e., if we denote $C^t:=b^{-1}((-\infty, t])$, then for all $c\ge a_0$, $C^t$ is compact and $M=\cup_{t\ge a_0}C^t$;
\item for $a_0\le t<s$, if $x\in \partial C^t$, then $\dist(x, \partial C^s)= s-t$, in other words, $b$ is the distance to the boundary of $C^s$ up to a constant.
\end{enumerate}
The third property shows that Busemann function can be viewed as certain distance function from infinity and this is exactly why our convexity estimate also works for $b$. In fact the compactness of $C^t$ implies the sectional curvature in $C^t$ is bounded:
$$
0<\delta\le \sectional(g)|_{C^t} \le \Delta,
$$
for some positive constants $\delta$ and $\Delta$ depend on $t$. Hence we can rescale the metric and apply \autoref{prop:Supporting} and \autoref{cor:Convex=>Tight} to $C^t$. (In fact we need to rescale the metric such that it has lower bound 1.) Therefore, $C^t$ is tight. Since $M=\cup_{t\ge a_0} C^t$, $M$ itself is tight. This finishes the proof of \autoref{thm:R3}.

Finally, we sketch the proof of \autoref{thm:wR3}, which is along the same line as in the previous proof. The only difficulty is that the sectional curvature is only nonnegative, hence the \autoref{prop:Supporting} does not apply. However Theorem C(a) in \cite{Wu1979} shows that $b$ is an essentially convex function, i.e. there exists $\epsilon>0$ such that $\kappa\circ b$ is $\epsilon$-convex for smooth $\kappa:\RR\to \RR$ with $\kappa>0 , \kappa'>0$ and $\kappa''>0$. Hence by \autoref{cor:Convex=>Tight}, $C^t$ is tight, hence $M$ is tight.

\bibliography{CTS}
\bibliographystyle{amsalpha}
\bigskip
\noindent
{\small Jian Ge and Yang Huang}\\
{\small Max Planck Institute for Mathematics } \\
{\small Vivatsgasse 7}\\
{\small 53111 Bonn, Germany} \\
{\small jge@mpim-bonn.mpg.de and yhuang@mpim-bonn.mpg.de} \\

\end{document}